\newtheorem{thm}{Theorem}[section]
\newtheorem{cor}[thm]{Corollary}
\newtheorem{prop}[thm]{Proposition}
\newtheorem*{ack}{Acknowledgements}
\theoremstyle{definition}
\newtheorem{defn}[thm]{Definition}
\newtheorem{exa}[thm]{Example}
\newcommand{\bbR}{\mathbb{R}}
\newcommand{\bbZ}{\mathbb{Z}}
\newcommand{\bbP}{\mathbb{P}}
\DeclareMathOperator{\conv}{Conv}
\DeclareMathOperator{\Int}{Int}
\DeclareMathOperator{\Star}{Star}
\title[Optimal transport stability of Weyl polytopes]{SYZ and Optimal transport stability of Weyl polytopes}
\author{Thibaut Delcroix}
\address{Univ Montpellier,
CNRS, Montpellier, France}
\email{thibaut.delcroix@umontpellier.fr}
\author{Jakob Hultgren}
\address{Dept of Mathematics and Mathematical Statistics, Umeå University, 901
87 Umeå, Sweden}
\email{jakob.hultgren@umu.se}
\subjclass[2020]{14J33,14J32, 32Q25, 14M25, 14J45, 35J96}
\keywords{Calabi-Yau manifolds; SYZ conjecture; Monge-Ampère equation; reflexive polytope; Weyl group}
\date{\today}
\begin{document}

\begin{abstract}
We prove optimal transport stability (in the sense of Andreasson and the second author) for reflexive Weyl polytopes: reflexive polytopes which are convex hulls of an orbit of a Weyl group. 
When the reflexive Weyl polytope is Delzant, it follows from work of Li, Andreasson, Hultgren, Jonsson, Mazzon, McCleerey that the weak metric SYZ conjecture holds for the Dwork family in the corresponding toric Fano manifold. 
In particular, we show that the weak metric SYZ conjecture holds for centrally symmetric smooth Fano toric manifolds. 
\end{abstract}

\maketitle

\section{Introduction}

In this note, we observe that the optimal transport plan, with respect to the ambient duality bracket, between two measures invariant under a pair of dual oriented hyperplane reflections, must be supported on the union of corresponding positive half-spaces. 
Our goal is then to apply this simple observation to prove the weak metric SYZ conjecture for certain hypersurfaces in toric Fano manifolds which admit a large group of torus equivariant automorphism, by applying the results of Yang Li \cite{Li_2023_Duke,Li_2022_Acta,Li_2024_CBJ}, Andreasson, Hultgren, Jonsson, Mazzon, McCleerey \cite{HJMM_2024_Adv,Andreasson-Hultgren,AHJMM_2024}. 

This group of torus equivariant automorphisms acts on the associated character lattice \(M\), and on the associated vector space \(M_{\bbR}\). 
A reflection point group (a group generated by linear reflections) which preserves a lattice is called cristallographic, and is the Weyl group of a root system \cite[Chapitre~VI, \textsection~2, n°5]{Bourbaki}, as follows from the classification of discrete reflection groups by Coxeter \cite{Coxeter_1934}. 
We consider reflexive polytopes which are vertex-transitive under such a reflection point group (necessarily cristallographic). Such polytopes are called Weyl Polytopes and our main result establishes optimal transport stability for such polytopes (see \cite{Andreasson-Hultgren}). 

In addition, the arguments give enough control of the optimal transport map that the arguments for regularity in \cite[Theorem 3]{Andreasson-Hultgren} and \cite{AHJMM_2024} can be applied. Consequently, the solution is smooth and satisfies a global $C^{1,\alpha}$-bound away from a co-dimension 2 set.

\begin{thm}
A reflexive Weyl polytope is optimal transport stable.  Consequently, it admits (along with its dual) a solution to the real Monge-Ampère equation on its boundary. Moreover the solution on the Weyl polytope is smooth and satisfy a global $C^{1,\alpha}$-bound away from a piecewise affine set of co-dimension 2 and the solution on its dual is smooth and satisfy a global $C^{1,\alpha}$-bound on the open faces. 
\end{thm}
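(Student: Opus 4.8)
The plan is to exploit the $W$-symmetry to pin down the optimal transport plan via the reflection principle of the introduction, and then to read off stability from the combinatorics of the Coxeter arrangement.

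\emph{Setup.} Write $P=\conv(W\cdot v_0)$, with $W$ the crystallographic reflection group (the Weyl group of a root system $\Phi\subset M_{\bbR}$, coroots $\Phi^{\vee}\subset N_{\bbR}$) and $v_0$ in the closed dominant chamber $\overline{C}_M=\{x\in M_{\bbR}:\langle x,\alpha^{\vee}\rangle\ge 0\text{ for all }\alpha\in\Phi^{+}\}$. Then $P$ is $W$-invariant, and since $W$ acts on $N=M^{\vee}$ by the contragredient representation through $GL(M)$, the dual polytope $P^{*}$ --- a lattice polytope, as $P$ is reflexive --- is $W$-invariant as well (though in general not vertex-transitive). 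Consequently the boundary measures $\mu$ on $\partial P$ and $\nu$ on $\partial P^{*}$ attached to the optimal transport formulation of the real Monge--Amp\`ere equation (see \cite{Andreasson-Hultgren}) are $W$-invariant, being Lebesgue-type measures on the facets with weights constant along $W$-orbits. Recall that $P$ is optimal transport stable exactly when the optimal plan $\gamma$ transporting $\mu$ to $\nu$, for the cost given by the duality bracket $\langle\cdot,\cdot\rangle$, is concentrated on the ``dual incidence'' locus $D_P\subseteq\partial P\times\partial P^{*}$ of \cite{Andreasson-Hultgren}.

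\emph{Step 1: confining the plan.} Fix $\alpha\in\Phi$. As $\mu,\nu$ are invariant under the pair of dual oriented hyperplane reflections $(s_\alpha,s_\alpha^{\vee})$ of $(M_{\bbR},N_{\bbR})$, the observation of the introduction shows $\gamma$ is supported on the union of the two corresponding closed positive half-spaces; running this with both orientations of the mirror $\ker\alpha^{\vee}$ --- admissible since $\mu,\nu$ are already invariant under $s_\alpha=s_{-\alpha}$ --- gives
\[
\operatorname{supp}\gamma\ \subseteq\ \bigl\{(x,y)\in\partial P\times\partial P^{*}\ :\ \langle x,\alpha^{\vee}\rangle\,\langle\alpha,y\rangle\le 0\bigr\}.
\]
Intersecting over $\alpha\in\Phi$: $\mu$ gives no mass to the intersections of $\partial P$ with the Coxeter mirrors, so $\gamma$-almost every $x$ lies in an open Weyl chamber of $M_{\bbR}$; for such $x$ every $\langle x,\alpha^{\vee}\rangle$ has a fixed sign, so the inequalities force $y$ into the closure of the opposite chamber of $N_{\bbR}$. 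Using the diagonal $W$-action to bring $x$ into the dominant chamber,
\[
\operatorname{supp}\gamma\ \subseteq\ \bigcup_{w\in W} w\cdot\Bigl[\bigl(\overline{C}_M\cap\partial P\bigr)\times\bigl((-\overline{C}_N)\cap\partial P^{*}\bigr)\Bigr],\qquad \overline{C}_N=\{y:\langle\alpha,y\rangle\ge 0\ \forall\,\alpha\in\Phi^{+}\}.
\]

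\emph{Step 2: from chambers to dual incidence.} It remains to see the right-hand side lies in $D_P$. On the $N$-side: for regular anti-dominant $u$ the unique minimiser of $\langle\cdot,u\rangle$ on $P=\conv(Wv_0)$ is $v_0$ (since $v_0$ is dominant), so $-\overline{C}_N\subseteq N_P(v_0)$; by reflexivity $N_P(v_0)$ is the cone over the dual facet $v_0^{*}$ of $P^{*}$, hence $(-\overline{C}_N)\cap\partial P^{*}\subseteq\overline{v_0^{*}}$. On the $M$-side: a $\gamma$-generic $x\in\overline{C}_M\cap\partial P$ lies in the relative interior of a facet $F$ of $P$, and the interior direction $x$ of $\operatorname{cone}(\overline F)=N_{P^{*}}(F^{*})$ being regular dominant forces the vertex $F^{*}$ of $P^{*}$ to minimise $\langle\cdot,u\rangle$ over $P^{*}$ for that dominant $u$, hence to be anti-dominant; then $F^{*}\in-\overline{C}_N\subseteq N_P(v_0)$, so $\langle v_0,F^{*}\rangle=-1$, i.e.\ $v_0\in\overline F$. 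Thus every $\gamma$-generic pair lies in $\overline F\times\overline{v_0^{*}}$ with $F^{*}\in\overline{v_0^{*}}$ (equivalently $v_0\in\overline F$), which is precisely the incidence condition defining $D_P$; so $\operatorname{supp}\gamma\subseteq D_P$ and $P$ is optimal transport stable.

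\emph{Consequences, and the hard point.} By \cite[Theorem 3]{Andreasson-Hultgren} (see also \cite{AHJMM_2024}), optimal transport stability yields a solution of the real Monge--Amp\`ere equation on $\partial P$ together with its Legendre-dual solution on $\partial P^{*}$; moreover the confinement of Step 1 is strictly stronger than bare stability --- it places the transported mass in the ``anti-dominant half'' of each incident facet of $P^{*}$ --- which is exactly the control of the optimal map required to run the regularity arguments of \cite[Theorem 3]{Andreasson-Hultgren} and \cite{AHJMM_2024}, giving that the solution on $P$ is smooth with a global $C^{1,\alpha}$-bound off a piecewise-affine set of codimension $2$ and that the solution on $P^{*}$ is smooth with a global $C^{1,\alpha}$-bound on the open faces. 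I expect the delicate point to be Step 2 --- matching the clean Weyl-chamber confinement with the exact combinatorial description of $D_P$, in particular when $v_0$ lies on one or more walls so that the normal fan of $P$ is a strict coarsening of the Coxeter fan --- together with a careful statement of the reflection observation itself, whose proof is a rearrangement argument for the bilinear cost that uses only invariance of the two marginals.
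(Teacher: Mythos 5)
Your overall strategy (reflection principle for the invariant optimal plan, then Weyl-chamber combinatorics to land in the incidence locus) is the paper's strategy, but there is a sign error that, as written, breaks the argument. Optimal transport stability is formulated for the cost $c(m,n)=-\langle m,n\rangle$ (the plan should maximize the pairing, since the target facet $\tau_m=\{n:\langle m,n\rangle=+1\}$ is where the pairing is maximal). For this cost, $c$-cyclical monotonicity applied to $(m,n)$ and $(\sigma m,\sigma^{\vee}n)$ gives $\langle m-\sigma m,\,n-\sigma^{\vee}n\rangle\geq 0$, i.e. $\langle m,\alpha^{\vee}\rangle\langle\alpha,n\rangle\geq 0$: dominant goes to \emph{dominant}. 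Your Step 1 concludes the opposite inequality ($\leq 0$), pairing the dominant part of $\partial P$ with the anti-dominant part of $\partial P^{*}$, and Step 2 is then forced to use the incidence $\langle v_0,F^{*}\rangle=-1$. With the standard dual polytope this is not the stability locus, and the claim it rests on is false in general: the inner normal cone at $v_0$ is the cone over $-\tau_{v_0}$, which is a cone over a face of $P^{*}$ only when $P^{*}$ is centrally symmetric; for type $A$ (e.g. the simplex of $\bbP^{n}$, or $V_{2k}$) it is not, and the set $\{y\in P^{*}:\langle v_0,y\rangle=-1\}$ need not even be a facet. So as stated the argument proves concentration on an ``antipodal'' locus for the anti-correlation transport problem, not optimal transport stability.

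If you flip the signs consistently (cost $-\langle\cdot,\cdot\rangle$, product $\geq 0$, dominant-to-dominant, $\langle v_0,F^{*}\rangle=+1$), your Step 2 becomes a correct and rather clean alternative to the paper's Proposition~\ref{prop:Cones}: instead of invoking Montagard--Rittatore's description of $\Delta_W^{\vee}(m)$, you get $\partial\Delta\cap C^+_M\subset\Star(v_0)$ and $\partial\Delta^{\vee}\cap C^+_N\subset\tau_{v_0}$ from the fact that a regular dominant functional is maximized on a $W$-orbit only at its dominant element (your handling of wall points via $\mu$-negligibility of the mirrors is also a nice improvement on the paper's gloss). The remaining shortfall is the regularity statement: you assert that the chamber confinement ``is exactly the control required'' by \cite[Theorem~3]{Andreasson-Hultgren} and \cite{AHJMM_2024}, whereas the paper actually carries this out: it uses $\mu_M(C^+_M)=\nu_N(C^+_N)$ to restrict the plan to $U_{v_0}\times\tau_{v_0}$, reduces to a planar optimal transport problem via the quotients $M_\bbR\to M_\bbR/\bbR v_0$ and $N_\bbR\to N_\bbR/\bbR n$, applies Caffarelli's interior and boundary regularity, and then propagates by $W$-symmetry and the Hölder homeomorphism property of the $c$-gradients to identify the codimension-$2$ discriminant locus. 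Some version of that reduction needs to be spelled out for the last sentence of the theorem to be proved.
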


Although reflexive polytopes in general have few symmetries, we highlight several infinite families of examples of Weyl polytopes from \cite{Voskresenskij-Klyachko_1985,Montagard-Rittatore_2023}. For illustration, consider the Gorenstein toric Fano variety \(X_d\) given by the face fans of the polytope  
$$ \Delta_d^{\vee} = \conv\{ \pm e_0,\ldots, \pm e_d, \pm\sum_{j=0}^d e_j\} $$
for $d\geq 2$. 
Then the moment polytope \(\Delta_d\) of \(X_d\) (which is the polytope dual to \(\Delta_d^{\vee}\)) is a Weyl polytope for the root system \(A_d\). 
If $d$ is even, then \(\Delta_d\) is Delzant or, equivalently, \(X_d\) is smooth. 
Recall that a toric variety \(X\) is centrally symmetric if there exists a torus-equivariant automorphism \(\sigma\) of \(X\) which restricts to the inversion on the open-dense torus orbit. 
By \cite[Theorem~6]{Voskresenskij-Klyachko_1985}, a smooth and Fano centrally symmetric toric variety is a product of varieties of the form \(X_{d}\) with \(d\) even, or \(\bbP^1\). 

Any toric Fano variety $X$ has a special family of Calabi-Yau hypersurfaces given by 
$$ Y_t = \{f_0 + t\sum f_m = 0\}$$
where $d+1$ is the dimension of $X$, $f_0$ is the unique $(\mathbb C^*)^{d+1}$ invariant anti-canonical section and the sum is taken over the anti-canonical sections corresponding to the vertices of the moment polytope of $X$. Extending terminology from the case $Y=\mathbb P^{d+1}$, we will refer to this family as the \emph{Dwork family} in $Y$.  
As a consequence of our main result, we obtain for example the following: 

\begin{cor}
The weak metric SYZ conjecture holds for the Dwork family in centrally symmetric smooth Fano toric manifolds, in other words, for each $\delta>0$, there is $t_0>0$ such that if $t<t_0$, then $Y_t$ admits a special Lagrangian torus fibration on a subset of relative volume $(1-\delta)$. 
\end{cor}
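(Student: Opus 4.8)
The plan is to reduce the Corollary to the main Theorem via the structure theory of centrally symmetric smooth Fano toric varieties, and then to feed the output of the main Theorem into the metric SYZ machinery of the cited works. First I would translate the hypothesis into a statement about the moment polytope. By \cite[Theorem~6]{Voskresenskij-Klyachko_1985}, a centrally symmetric smooth Fano toric manifold $X$ is, up to torus-equivariant isomorphism, a finite product of copies of $X_d$ with $d$ even and of $\bbP^1$. Its anticanonical moment polytope $\Delta$ is then the corresponding product of the polytopes $\Delta_d$ (for $d$ even) and of the moment polytope of $\bbP^1$.

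Next I would check that $\Delta$ is a Delzant reflexive Weyl polytope, so that the main Theorem applies. Reflexivity and the Delzant property of $\Delta$ are automatic because $X$ is smooth and Fano. For the Weyl-polytope property, I would use that it is preserved by finite products: if each factor $P_i \subset M_{i,\bbR}$ is the convex hull of an orbit of a crystallographic reflection group $W_i$ acting on the lattice $M_i$, then $\prod_i P_i$ is the convex hull of an orbit of $\prod_i W_i$, and $\prod_i W_i$ is again a crystallographic reflection group — it is generated by reflections, it preserves the lattice $\bigoplus_i M_i$, and it is the Weyl group of the direct sum of the corresponding root systems. The factor $\Delta_d$ is a Weyl polytope for $A_d$ and, when $d$ is even, is Delzant, both as recalled in the excerpt; and the moment polytope of $\bbP^1$ is a segment of lattice length $2$, which is a reflexive Delzant Weyl polytope for $A_1$. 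Hence $\Delta$ is a Delzant reflexive Weyl polytope.

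Applying the main Theorem to $\Delta$ now gives that $\Delta$ is optimal transport stable, that $\Delta$ and $\Delta^{\vee}$ carry a solution of the real Monge-Ampère equation on their boundaries, and that these solutions have the stated regularity: on $\Delta$, smooth with a global $C^{1,\alpha}$-bound away from a piecewise affine codimension-$2$ subset; on $\Delta^{\vee}$, smooth with a global $C^{1,\alpha}$-bound on the open faces. This is exactly the package of inputs required by the results of Li \cite{Li_2023_Duke,Li_2022_Acta,Li_2024_CBJ} and Andreasson--Hultgren--Jonsson--Mazzon--McCleerey \cite{HJMM_2024_Adv,Andreasson-Hultgren,AHJMM_2024}, as recorded already in the introduction for the Delzant case. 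Those results then produce, for the Dwork family $Y_t$ in the toric Fano manifold with moment polytope $\Delta$, a special Lagrangian torus fibration whose base covers a relative volume tending to $1$ as $t \to 0$; quantitatively, for each $\delta > 0$ there is $t_0 > 0$ with the property that this fibration covers relative volume $1 - \delta$ once $t < t_0$. Since the moment polytope of $X$ is $\Delta$, this is precisely the assertion of the Corollary.

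The essential analytic and geometric content has been discharged by the main Theorem and the cited papers, so the remaining difficulty is bookkeeping: verifying that the Weyl-polytope structure survives the products appearing in the Voskresenskij--Klyachko classification, and checking that the hypotheses in the theorems of Li and Andreasson--Hultgren--Jonsson--Mazzon--McCleerey are met verbatim by the output of the main Theorem. I expect the latter to be the only genuinely delicate point, and it is largely sidestepped by the fact that the metric SYZ machinery is phrased directly in terms of the reflexive moment polytope and its boundary Monge-Ampère solution, so one never has to decompose $Y_t$ along the product structure.
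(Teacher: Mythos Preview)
Your proposal is correct and follows essentially the same route as the paper: use the Voskresenskij--Klyachko classification to identify the moment polytope as a product of the $\Delta_d$ (with $d$ even) and segments, observe that such a product is a Delzant reflexive Weyl polytope, and then invoke the main Theorem together with the cited SYZ machinery. The paper leaves the deduction of the Corollary implicit in the introduction; your explicit verification that products of Weyl polytopes are Weyl polytopes (via the direct sum of root systems) fills in the only step the paper does not spell out.
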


In addition to this, we determine precisely which three dimensional reflexive polytopes are Weyl polytopes. 
In conjunction with Yang Li's sufficient condition (see \cite{Li_2024_CBJ}), we deduce that, among three dimensional reflexive polytopes with automorphism group of order strictly larger than \(8\), only three may not satisfy optimal transport stability: polytopes number 3036, 735 (wich are dual to each other) and 2355 (which, incidentally, is not Kähler-Einstein). 
The numbers here and in the body of the paper refer to the identifier of 
a three-dimensional reflexive polytope as classified by Kreuzer and Skarke \cite{Kreuzer-Skarke_1997,Kreuzer-Skarke_1998}, as encoded in Kasprzyk's grdb database \cite{Kasprzyk_2010,grdb} of canonical Fano polytopes (reflexive polytopes are canonical). 
Note that the numbering starts at \(1\) instead of \(0\) so that polytope number \(1\) is the (dual of the moment) polytope of \(\bbP^3\). 

\begin{ack}
The first author is partially funded by ANR-21-CE40-0011 JCJC project MARGE. The second author is partially funded by the Swedish Research Council Starting Grant 2023-05485. 
The first author thanks Pierre-Louis Montagard for various discussions related to \cite{Montagard-Ressayre_2009} and \cite{Montagard-Rittatore_2023}. The second author thanks Rolf Andreasson, Mattias Jonsson, Yang Li, Enrica Mazzon and Nick McCleerey for many discussions on the subject. 
\end{ack}

\section{Reflexive Weyl polytopes}

\subsection{Reflexive polytopes}

We fix \(M\simeq \bbZ^{d+1}\) a lattice, we denote by \(N = \hom(M,\bbZ)\) the dual lattice, and we let \(M_{\bbR}=M\otimes \bbR\) and \(N_{\bbR}=N\otimes \bbR\) be the associated real vector spaces. 
We denote by \(\langle \cdot, \cdot \rangle : M_{\bbR}\times N_{\bbR} \to \bbR\) the duality bracket.

A \emph{lattice polytope} \(\Delta\subset M_{\bbR}\) is the convex hull of a finite set of elements of \(M\). 
We denote by \(V(\Delta)\) the set of vertices of \(\Delta\). 
If \(0\in \Int(\Delta)\), the dual polytope \(\Delta^{\vee}\) is the convex polytope in \(N_{\bbR}\) defined by 
\[ \Delta^{\vee} = \{ n\in N_{\bbR} \mid \forall m\in V(\Delta), \langle m,n\rangle \leq 1\} \]
Given \(m\in V(\Delta)\), we denote by 
\[ \tau_m = \{ n\in \Delta^{\vee} \mid \langle m,n\rangle =1 \} \]
the facet of \(\Delta^{\vee}\) defined by \(m\). 
We also let \(\Star(m)\) be the \emph{closed star} of \(m\), that is, the union of all closed faces of \(\partial \Delta\) containing \(m\). 

\begin{center}
\begin{tikzpicture}
\draw[dotted,very thin] (-3,3) grid (2,-2);
\draw (0,0) node{+};
\draw (1,-1) -- (1,2) -- (-2,-1) -- cycle;
\draw (1,2) node{\(\bullet\)};
\draw (1.3,2.2) node{\(m\)};
\draw (0.5,-1) node[below]{\(\Delta\)};
\draw[teal] (-0.5,1.5) node{\(\Star(m)\)};
\draw [draw=teal, fill=teal, opacity=0.2] (-2.1,-1) -- (0.9,2) -- (1.1,2) -- (1.1,-1) -- (0.9,-1) -- (0.9,1.8) -- (-1.9,-1) -- cycle;
\end{tikzpicture}
\begin{tikzpicture}
\draw[dotted,very thin] (-3,3) grid (2,-2);
\draw (0,0) node{+};
\draw[thick] (-1,1) -- (0,-1) -- (1,0) -- cycle;
\draw (0.5,-1) node[below]{\(\Delta^{\vee}\)};
\draw [draw=teal, fill=teal, opacity=0.2] (-1,1.1) -- (1,.1) -- (0.9,-0.1) -- (-0.9,0.8) --cycle;
\draw[teal] (0.5,0.5) node{\(\tau_m\)};
\end{tikzpicture}
\end{center}

\begin{defn}
A lattice polytope \(\Delta\subset M_{\bbR}\) is \emph{reflexive} if it contains the origin in its interior and its dual \(\Delta^{\vee}\) is also a lattice polytope. 
\end{defn}

Finally, let us highlight the sufficient condition of optimal transport stability obtained by Yang Li \cite{Li_2024_CBJ} for future reference.

\begin{defn}
We say that a reflexive polytope \(\Delta\) satisfies 
the \emph{vertex condition}
if for every pair \((m,n)\) where \(m\) is a vertex of \(\Delta\) and \(n\) is a vertex of \(\Delta^{\vee}\), one has 
\[ \langle m, n \rangle \neq 0 \]
\end{defn}

\subsection{Weyl polytopes}
\label{sec_Weyl_polytopes}

We recall basic definitions and properties of root systems, using \cite[Chapitre VI]{Bourbaki} as a reference, and introduce Weyl polytopes. 

\begin{defn}
A set \(\Phi\)  is a root system in \(M_{\bbR}\) if 
\begin{enumerate}
    \item \(0\notin \Phi \subset M_{\bbR}\), 
    \item \(\Phi\) spans \(M_{\bbR}\) as a vector space,
    \item for every \(\alpha\in \Phi\), there exists an \(\alpha^{\vee}\in N_{\bbR}\) such that \(\langle \alpha,\alpha^{\vee}\rangle = 2\), the reflection \(\sigma_{\alpha}:m\mapsto m-\langle m,\alpha^{\vee}\rangle \alpha\) sends \(\Phi\) to \(\Phi\), and for any \(\alpha\in \Phi\), \(\alpha^{\vee}(\Phi)\subset \bbZ\). 
\end{enumerate}
\end{defn}
We denote the group generated by the \(\sigma_{\alpha}\) by \(W=W(\Phi)\), it is called the \emph{Weyl group} of the root system. 
The \(\alpha^{\vee}\) are uniquely determined, they form the \emph{dual root system} \(\Phi^{\vee}\subset N_{\bbR}\), and \((\alpha^{\vee})^{\vee}=\alpha\). 
The Weyl group \(W^{\vee}\) of the root system \(\Phi^{\vee}\) is isomorphic to \(W\) via \(\sigma_{\alpha}\mapsto \sigma_{\alpha^{\vee}}\), and this correspondence preserves the duality bracket: for any \(m\in M_{\bbR}\) and \(n\in N_{\bbR}\), we have \(\langle \sigma_{\alpha}(m),n\rangle = \langle m,\sigma_{\alpha^{\vee}}(n)\rangle\).

Let \(S=\{\alpha_1,\cdots,\alpha_{d+1}\} \subset \Phi\) denote a set of \emph{simple roots}, that is, such that each \(\alpha\in \Phi\) writes as \(\sum_{i=1}^{d+1} x_i \alpha_i\) with either all \(0\leq x_i\in \bbZ\) or all \(0\geq x_i \in \bbZ\). 
It always exists, and \(W\) acts simply transitively on the set of sets of simple roots. 
Given a fixed choice of \(S\), we define:

\begin{defn}
The \emph{positive Weyl chamber} \(C^+_M\) in \(M_{\bbR}\) is the set 
\[ C^+_M = \{m\in M_{\bbR} \mid \forall \alpha\in S, \langle m, \alpha^{\vee}\rangle \geq 0 \}. \]
The \emph{positive dual Weyl chamber} \(C^+_N\) in \(N_{\bbR}\) is the set 
\[ C^+_N = \{n\in N_{\bbR} \mid \forall \alpha\in S, \langle \alpha, n \rangle \geq 0 \}.\]
\end{defn}

\begin{exa}
The figure below represents the root system \(B_2\). The roots are the ends of the arrows, the two positive roots \(\alpha_1\) and \(\alpha_2\) are indicated, and the positive Weyl chamber is indicated in dashed lines. 
The Weyl group of this root system is the dihedral group of order \(8\) which is the group of isometries of a square. 
\begin{center}
\begin{tikzpicture}
\draw[<->, thick] (-1,-1) -- (1,1);
\draw[<->, thick] (1,-1) -- (-1,1) node[left]{\(\alpha_1\)};
\draw[<->, thick] (0,-1) -- (0,1);
\draw[<->, thick] (-1,0) -- (1,0) node[right]{\(\alpha_2\)};
\draw[dashed, very thin] (0,0) -- (2,2); 
\draw[dashed, very thin] (0,0) -- (0,2); 
\end{tikzpicture}
\end{center}
\end{exa}

\begin{prop}{\cite[Chapitre VI, \textsection 1, n°5]{Bourbaki}}
The positive Weyl chamber is a fundamental domain for the action of \(W\) on \(M_{\bbR}\). 
\end{prop}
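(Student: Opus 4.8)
The plan is to verify the two defining properties of a fundamental domain: that every $W$-orbit in $M_{\bbR}$ meets $C^+_M$ (surjectivity), and that it meets $C^+_M$ in exactly one point (uniqueness). Throughout I would freely use two standard structural facts about the finite root system $\Phi$, cited from \cite[Chapitre VI]{Bourbaki}: that $W$ is generated by the simple reflections $\sigma_{\alpha_i}$ (and is finite, being a subgroup of the permutation group of the finite spanning set $\Phi$), and that every positive root $\beta$ has coroot $\beta^{\vee}=\sum_i c_i \alpha_i^{\vee}$ with all $c_i\geq 0$. I would also fix the length function $\ell(w)$ counting the letters in a reduced expression of $w$ in the generators $\sigma_{\alpha_i}$.

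For surjectivity, I would pick $\delta$ in the interior of $C^+_N$ (nonempty since the simple roots form a basis of $M_{\bbR}$), so that $\langle \alpha_i,\delta\rangle>0$ for every simple root $\alpha_i$. Given $m\in M_{\bbR}$, the orbit $W\cdot m$ is finite, so I can choose $m'=w(m)$ in the orbit maximizing $\langle m',\delta\rangle$. I then claim $m'\in C^+_M$: if not, then $\langle m',\alpha_i^{\vee}\rangle<0$ for some $i$, and the reflection formula gives
\[ \langle \sigma_{\alpha_i}(m'),\delta\rangle = \langle m',\delta\rangle - \langle m',\alpha_i^{\vee}\rangle\,\langle \alpha_i,\delta\rangle > \langle m',\delta\rangle, \]
since the subtracted product is negative; this contradicts maximality, so $m'\in C^+_M$.

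The harder part will be uniqueness, which I would prove by induction on $\ell(w)$: if $\lambda,\mu\in C^+_M$ and $\mu=w(\lambda)$, then $\mu=\lambda$. The case $\ell(w)=0$ is trivial. For $\ell(w)>0$, writing $w=\sigma_{\alpha_{i_1}}\cdots\sigma_{\alpha_{i_k}}$ reduced and taking $i=i_k$, the standard reduced-word analysis gives $\ell(w\sigma_{\alpha_i})=\ell(w)-1$ and shows that $w(\alpha_i)$ is a negative root. Using the duality-preserving action $\langle w(\lambda),n\rangle=\langle \lambda,w^{-1}(n)\rangle$ together with $W$-equivariance of coroots $(w\alpha_i)^{\vee}=w(\alpha_i^{\vee})$, I compute
\[ \langle \lambda,\alpha_i^{\vee}\rangle = \langle w(\lambda),(w\alpha_i)^{\vee}\rangle = \langle \mu,(w\alpha_i)^{\vee}\rangle. \]
Since $w(\alpha_i)$ is a negative root, $(w\alpha_i)^{\vee}$ is a non-positive combination of simple coroots, so $\mu\in C^+_M$ forces $\langle \mu,(w\alpha_i)^{\vee}\rangle\leq 0$; combined with $\langle \lambda,\alpha_i^{\vee}\rangle\geq 0$ this yields $\langle \lambda,\alpha_i^{\vee}\rangle=0$. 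Hence $\sigma_{\alpha_i}(\lambda)=\lambda$, so $\mu=(w\sigma_{\alpha_i})(\lambda)$ with $\ell(w\sigma_{\alpha_i})<\ell(w)$, and the inductive hypothesis gives $\mu=\lambda$.

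The main obstacle is really the bookkeeping in the uniqueness step: getting the signs right via the duality-preserving identification of the $W$-action on $M_{\bbR}$ with the $W^{\vee}$-action on $N_{\bbR}$, and invoking the correct length-versus-sign fact $\ell(w\sigma_{\alpha_i})<\ell(w)\iff w(\alpha_i)$ negative. I would cite these from Bourbaki rather than reprove them; with them in hand the induction closes cleanly, and the two properties together establish that $C^+_M$ is a fundamental domain.
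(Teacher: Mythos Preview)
Your argument is correct and is essentially the standard proof from Bourbaki. The paper itself does not give a proof of this proposition at all: it simply states the result with a citation to \cite[Chapitre~VI, \S1, n\textsuperscript{o}5]{Bourbaki} and moves on. Your write-up therefore supplies strictly more detail than the paper; the surjectivity via maximizing against an interior element of $C_N^+$ and the uniqueness via induction on $\ell(w)$ using the sign change $w(\alpha_i)<0$ are exactly the ingredients one finds in the cited reference.
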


\begin{defn}
Given \(m\in M_{\bbR}\), the polytope 
\[ \Delta_W(m) := \conv \{w\cdot m \mid w\in W \} \]
is called the \emph{Weyl polytope} associated with \(m\). 
We denote by \(\Delta_W^{\vee}(m)\) the polytope dual to \(\Delta_W(m)\)
\end{defn}

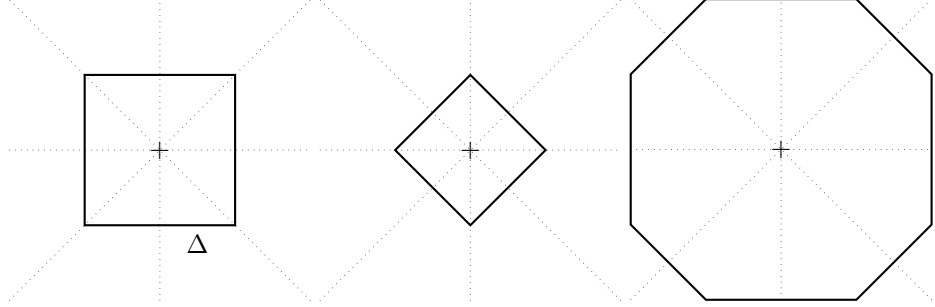
\begin{figure}
\caption{Weyl polytopes for the root system \(B_2\)}
\label{fig_Weyl_B2}
\begin{center}
\begin{tikzpicture}
\draw (0,0) node{+};
\draw[dotted, very thin] (-2,-2) -- (2,2); 
\draw[dotted, very thin] (0,-2) -- (0,2); 
\draw[dotted, very thin] (2,-2) -- (-2,2); 
\draw[dotted, very thin] (-2,0) -- (2,0); 
\draw[thick] (1,-1) -- (1,1) -- (-1,1) -- (-1,-1) -- cycle;
\draw (0.5,-1) node[below]{\(\Delta\)};
\end{tikzpicture}
\begin{tikzpicture}
\draw (0,0) node{+};
\draw[thick] (-1,0) -- (0,-1) -- (1,0) -- (0,1) -- cycle;
\draw[dotted, very thin] (-2,-2) -- (2,2); 
\draw[dotted, very thin] (0,-2) -- (0,2); 
\draw[dotted, very thin] (2,-2) -- (-2,2); 
\draw[dotted, very thin] (-2,0) -- (2,0); 
\end{tikzpicture}
\begin{tikzpicture}
\draw (0,0) node{+};
\draw[thick] (-1,-2) -- (1,-2) -- (2,-1) -- (2,1) -- (1,2) -- (-1,2) -- (-2,1) -- (-2,-1) -- cycle;
\draw[dotted, very thin] (-2,-2) -- (2,2); 
\draw[dotted, very thin] (0,-2) -- (0,2); 
\draw[dotted, very thin] (2,-2) -- (-2,2); 
\draw[dotted, very thin] (-2,0) -- (2,0); 
\end{tikzpicture}
\end{center}
\end{figure}

Examples of Weyl polytopes for the root system \(B_2\) are given in Figure~\ref{fig_Weyl_B2}. 
Note that, since \(C^+_M\) is a fundamental domain for the action of \(W\) on \(M_{\bbR}\), any Weyl polytope has a vertex in \(C^+_M\), and we may always assume that \(m\in C_M^+\).  
However, this vertex can (and will often in our examples) be on the boundary of \(C^+_M\). 
Note also that, although a Weyl polytope always contains the origin in its relative interior, in general, the dual of a Weyl polytope is not a Weyl polytope. 

The \emph{weight lattice} \(\Lambda_w\subset M_{\bbR}\) is the set of elements \(m\in M_{\bbR}\) such that for any \(\alpha\in\Phi\), \(\langle m, \alpha^{\vee} \rangle \in \bbZ\).  
The \emph{fundamental weights} are the elements of the dual basis \((\omega_1,\ldots, \omega_{d+1})\) of the basis of simple coroots \((\alpha_1^{\vee},\ldots, \alpha_{d+1}^{\vee})\). 
They generate the lattice \(\Lambda_w\). 
The \emph{root lattice} \(\Lambda_r\subset M_{\bbR}\) is the lattice generated by \(\Phi\). 
We fix a root system \(\Phi\) in \(M_{\bbR}\), with the additional condition that \(\Lambda_r\subset M \subset \Lambda_w\). 

We will be interested in Weyl polytopes which are reflexive, and we will exhibit several families of examples.  

\begin{prop}
\label{prop:Cones}
If \(m\in M\cap C^+_M\), then \(\Delta_W(m)\) is a \(W\)-invariant, vertex-transitive lattice polytope which contains the origin in its interior. 
Furthermore, 
\[ \partial\Delta_W(m) \cap C^+_M \subset \Star(m) \] 

The dual \(\Delta_W^{\vee}(m)\) is invariant under the action of \(W^{\vee}\), and the action induced by \(W^{\vee}\) on facets of \(\Delta_W^{\vee}(m)\) is transitive. 
Furthermore, 
\[\partial\Delta_W^{\vee}(m) \cap C^+_N \subset \tau_m \] 
\end{prop}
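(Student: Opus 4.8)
The plan is to establish the four assertions in turn; only the inclusion $\partial\Delta_W(m)\cap C^+_M\subseteq\Star(m)$ needs a genuine idea. I will use two standard facts: that $W$ preserves the lattice $M$ (since $\Lambda_r\subseteq M\subseteq\Lambda_w$); and, writing $w\mapsto w^{\vee}$ for the isomorphism $W\xrightarrow{\sim}W^{\vee}$ so that the pairing is invariant, $\langle w\cdot m',w^{\vee}\cdot n'\rangle=\langle m',n'\rangle$, that for $m\in C^+_M$, $n\in C^+_N$ and $w\in W$ one has $\langle w\cdot m,n\rangle\le\langle m,n\rangle$ — because $m-w\cdot m$ is a nonnegative real combination of simple roots \cite[Ch.~VI]{Bourbaki}, which pairs nonnegatively with $n\in C^+_N$. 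Granting these, the first assertion is quick: $W\cdot m$ is a finite subset of $M$, so $\Delta_W(m)=\conv(W\cdot m)$ is a $W$-invariant lattice polytope; its vertex set is a nonempty $W$-stable subset of the single orbit $W\cdot m$, hence equals it, so $W$ is transitive on the vertices; and the origin, being the barycenter $|W|^{-1}\sum_{w\in W}w\cdot m$ of the vertices, lies in $\Delta_W(m)$ (in its interior once $W\cdot m$ spans $M_{\bbR}$, as in all the examples considered, and in its relative interior in general).

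For the key inclusion, fix $x\in\partial\Delta_W(m)\cap C^+_M$ and consider the normal cone $N_x=\{n\in N_{\bbR}\mid\langle x,n\rangle\ge\langle y,n\rangle\ \text{for all }y\in\Delta_W(m)\}$, a nonzero closed convex cone; note that $h(n):=\max_{y\in\Delta_W(m)}\langle y,n\rangle=\langle x,n\rangle$ is $>0$ for every $n\in N_x\setminus\{0\}$ because $0\in\Int\Delta_W(m)$. Put $J=\{i\mid\langle x,\alpha_i^{\vee}\rangle=0\}$ and let $W_J\subseteq W$ be the parabolic subgroup generated by the $\sigma_{\alpha_i}$, $i\in J$; it fixes $x$. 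Then $W_J^{\vee}$ stabilizes $N_x$ (for $u\in W_J$ and $n\in N_x$, invariance of the pairing gives $\max_{y}\langle y,u^{\vee}\cdot n\rangle=h(n)$, attained at $u\cdot x=x$), and for $i\notin J$ and $n\in N_x$ the comparison $\langle x,n\rangle\ge\langle\sigma_{\alpha_i}x,n\rangle=\langle x,n\rangle-\langle x,\alpha_i^{\vee}\rangle\langle\alpha_i,n\rangle$ together with $\langle x,\alpha_i^{\vee}\rangle>0$ forces $\langle\alpha_i,n\rangle\ge0$. Now take any $n_1\in N_x\setminus\{0\}$ and average: $\bar n:=|W_J|^{-1}\sum_{u\in W_J}u^{\vee}\cdot n_1\in N_x$. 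Since every $u$ fixes $x$, $\langle x,\bar n\rangle=\langle x,n_1\rangle=h(n_1)>0$, so $\bar n\ne0$; being $W_J^{\vee}$-invariant, $\langle\alpha_i,\bar n\rangle=0$ for $i\in J$; and $\langle\alpha_i,\bar n\rangle\ge0$ for $i\notin J$ by the previous line. Hence $\bar n\in C^+_N$, so by the second standard fact $\bar n$ lies in the normal cone of $\Delta_W(m)$ at the vertex $m$. Therefore $F:=\{y\in\Delta_W(m)\mid\langle y,\bar n\rangle=h(\bar n)\}$ is a proper face containing both $m$ and $x$, and $x\in F\subseteq\Star(m)$.

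For the third assertion, $\Delta_W^{\vee}(m)$ is cut out by the inequalities $\langle v,n\rangle\le1$, $v\in W\cdot m$, a system preserved by $W^{\vee}$ since $W\cdot m$ is $W$-stable and the pairing is invariant; so $\Delta_W^{\vee}(m)$ is $W^{\vee}$-invariant. By the standard polar duality of faces (valid since $0\in\Int\Delta_W(m)$), the facets of $\Delta_W^{\vee}(m)$ are exactly the $\tau_v=\{n\in\Delta_W^{\vee}(m)\mid\langle v,n\rangle=1\}$ for $v\in V(\Delta_W(m))=W\cdot m$, and invariance of the pairing gives $w^{\vee}\cdot\tau_v=\tau_{w\cdot v}$; since $W$ is transitive on $W\cdot m$, $W^{\vee}$ is transitive on the facets. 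Finally, for the fourth assertion, let $n\in\partial\Delta_W^{\vee}(m)\cap C^+_N$; some defining inequality is tight, i.e.\ $\langle w\cdot m,n\rangle=1$ for some $w\in W$, and then $1=\langle w\cdot m,n\rangle\le\langle m,n\rangle\le1$ by the second standard fact, so $\langle m,n\rangle=1$, i.e.\ $n\in\tau_m$.

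The only real difficulty is in the second paragraph: when $x$ lies on walls of the chamber $C^+_M$, a supporting functional of $\Delta_W(m)$ at $x$ need not lie in $C^+_N$, so one cannot simply pass to the face it defines. Averaging a normal vector over the parabolic stabilizer $W_J$ of $x$ repairs this, and the positivity $h(n)>0$ (a consequence of $0\in\Int\Delta_W(m)$) is precisely what prevents the averaged vector from vanishing.
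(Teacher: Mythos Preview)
Your proof is correct and self-contained, but it follows a genuinely different route from the paper's.  The paper imports from \cite[Section~4]{Montagard-Rittatore_2023} a structural description of \(\Delta_W^{\vee}(m)\): with \(L=\{i\mid\langle\alpha_i^{\vee},m\rangle=0\}\), the vertices of \(\tau_m\) lie on the extremal rays of the cone \(C_L=\bigcup_{w\in W_L}w\cdot C_N^+\supseteq C_N^+\), which immediately gives \(\partial\Delta_W^{\vee}(m)\cap C_N^+\subseteq\tau_m\); it then derives the \(\Star(m)\) inclusion by contradiction, using that any vertex of \(\Delta_W^{\vee}(m)\) outside \(\tau_m\) is a nontrivial \(W\)-translate of a fundamental coweight and hence fails to lie in \(C_N^+\).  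You instead use only the classical highest-weight inequality \(\langle w\cdot m,n\rangle\le\langle m,n\rangle\) for \(m\in C_M^+\), \(n\in C_N^+\), which makes the dual-side inclusion a one-liner, and for the primal side you manufacture a supporting functional in \(C_N^+\) at an arbitrary boundary point \(x\in C_M^+\) by averaging an element of the normal cone \(N_x\) over the parabolic stabilizer \(W_J^{\vee}\) of \(x\).  Your approach is more elementary and avoids the external reference; the paper's approach, though, yields extra information about the vertex structure of \(\tau_m\) (all vertices are positive multiples of \(W_L\)-translates of fundamental coweights), which is not needed for the proposition itself.
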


\begin{proof}
Most properties follow readily from the definition, and the fact that \(C_M^+\) is a fundamental domain for the action of \(W\). 
Let us expand on the properties \(\partial\Delta_W(m) \cap C^+_M \subset \Star(m)\) and \(\partial\Delta_W^{\vee}(m) \cap C^+_N\subset \tau_m\). 
For this, we rely on \cite[Section~4]{Montagard-Rittatore_2023} who provide a precise description of \(\Delta_W^{\vee}(m)\). 

Set \(L= \{i \mid \langle \alpha^{\vee}_i,m\rangle= 0\}\). 
Let \(W_L\) be the subgroup of \(W\) generated by the reflections \(\sigma_{\alpha_i}\), for \(i\in L\). 
Then the vertices of \(\tau_m\) are on the extremal rays of the cone 
\[ C_L = \bigcup_{w\in W_L} w\cdot C_N^+ \]
and each extremal ray does contain a vertex. 
Since \(C_L\) contains \(C_N^+\), we obtain the property \(\partial\Delta_W^{\vee}(m) \cap C^+_N\subset \tau_m\). 

As another consequence, by vertex transitivity on \(\Delta_W(m)\), any vertex of \(\Delta_W^{\vee}(m)\) is of the form \(w(\omega_i^{\vee})\) for some \(w\in W\) and some fundamental coweight \(\omega_i^{\vee}\). 
Furthermore, if \(n\in V(\Delta_W^{\vee}(m))\setminus V(\tau_m)\), then \(w\) is a non-trivial element of the Weyl group. 

Let us now prove that \(\partial\Delta_W(m) \cap C^+_M \subset \Star(m)\) by contradiction. 
Assume that there exists a \(x\in C_M^+\cap \partial \) such that \(x\notin \Star(m)\). 
Then by the above there exists a non-trivial \(w\in W\), an \(i\in \{1,\ldots,d+1\}\), and a constant \(c>0\) such that \(w(\omega_i^{\vee})(p)\leq c\) for \(p\in \Delta_W(m)\), and  
 \(w(\omega_i^{\vee})(x)= c\). 
 Since \(w\) is non-trivial, \(w(\omega_i^{\vee})\notin C_N^+\), so there exists \(\alpha\in S\) with \(w(\omega_i^{\vee})(\alpha)<0\). 
 Since \(\Delta_W(m)\) is \(W\)-stable, \(\sigma_{\alpha}(x)\in \Delta_W(m)\), and by convexity, \(tx+(1-t)\sigma_{\alpha}(x) = x - (1-t)\langle x,\alpha^{\vee}\rangle \alpha \in \Delta_W(m)\) for all \(t\in [0,1]\). 
 Now, 
 \begin{align*}
 w(\omega_i^{\vee})(tx+(1-t)\sigma_{\alpha}(x)) & 
 = w(\omega_i^{\vee})(x) -(1-t)\langle x,\alpha^{\vee}\rangle w(\omega_i^{\vee})(\alpha) \\ 
 & = c - (1-t)\langle x,\alpha^{\vee}\rangle w(\omega_i^{\vee})(\alpha) \\
 & > c
 \end{align*}
 for \(t<1\), since \(x\in C_M^+\) and \(w(\omega_i^{\vee})(\alpha)<0\). 
 This contradicts that \(w(\omega_i^{\vee})(p)\leq c\) for \(p\in \Delta_W(m)\). 
\end{proof}

\subsection{Infinite families of examples}

In \cite{Montagard-Rittatore_2023}, Montagard and Rittatore classified the reflexive Weyl polytopes when \(\Phi\) is an irreducible root system and \(M=\Lambda_r\), obtaining eleven infinite families of examples plus some exceptional cases.  
In particular, they recover the smooth examples, which were previously classified by Voskresenskij and Klyachko in \cite{Voskresenskij-Klyachko_1985}. 
Using Bourbaki's standard ordering of roots, the families are summarized in Table~\ref{table_MR}.

\begin{table}
\begin{center}
\caption{Montagard-Rittatore polytopes \(\Delta_W(m)\)}
\label{table_MR}
\begin{tabular}{ccccc}
\toprule
Type & dimension & \(m=\) & variety & smooth? \\
\midrule
\(A_n\) & \(n\geq 1\) & \((n+1)\omega_1\) & \(\bbP^n\) & Yes \\ 
\(A_n\) & \(n\geq 2\) & \(\omega_1+\omega_n\) & &  \\ 
\(A_{2k+1}\) & \(k\geq 1\) & \(2\omega_{k+1}\) & \(V_{2k+1}\) &  \\ 
\(A_{2k}\) & \(k\geq 2\) & \((2k+1)(\omega_{k}+\omega_{k+1})\) & \(V_{2k}\) & Yes \\ 
\(B_n\) & \(n\geq 2\) & \(\omega_1\) & & \\
\(B_n\) & \(n\geq 2\) & \(2\omega_n\) & \((\bbP^1)^n\) & Yes \\
\(C_n\) & \(n\geq 3\) & \(2\omega_1\) & &  \\ 
\(C_n\) & \(n\geq 3\) & \(\omega_2\) & &  \\ 
\(D_n\) & \(n\geq 4\) & \(2\omega_1\) & &  \\
\(D_n\) & \(n\geq 4\) & \(\omega_2\) & &  \\ 
\(E_6\) & \(6\) & \(\omega_2\) & &  \\
\(F_4\) & \(4\) & \(\omega_4\) & &  \\
\(G_2\) & \(2\) & \(\omega_2\) & \(V_2\) & Yes \\  
\bottomrule
\end{tabular}
\end{center}
\end{table}

\begin{exa}
The variety \(V_n\) (corresponding to types $A_n$ and $G_2$), which is smooth for even \(n\) and singular for odd \(n\), is a toric variety which is isomorphic in codimension \(1\) to the blow up of \((\bbP^1)^n\) at two points, and which may be described (alternatively from the description of its moment polytope \(\Delta\) as in the table), as the Gorenstein toric Fano variety such that \(\Delta^{\vee}\) is the convex hull  
\[ \Delta^{\vee}= \conv \{\pm e_1, \ldots, \pm e_n, \pm(e_1+\cdots+e_n)\} \]
where \(e_1,\ldots,e_n\) is a basis of \(N\). 
Note that all the points we take the convex hull of turn out to be vertices of the polytope. Note that $\Delta$, and not in general $\Delta^\vee$, is a Weyl polytope. 

Let us describe one vertex of \(\Delta\) in this description. For this, consider the dual basis \(e_1^*,\ldots,e_n^*\) of \(M\), and the element \(e_1^*+\cdots+e_{\lfloor n/2\rfloor}^*-e_{\lfloor n/2\rfloor+1}^*-\cdots e_n^*\). 
One checks readily that the duality bracket between this element and the vertices of \(\Delta^{\vee}\) is always less than \(1\), with equality exactly for the vertices 
\(e_1,\ldots, e_{\lfloor n/2\rfloor}, -e_{\lfloor n/2\rfloor+1},\ldots,-e_n\) if \(n\) is even, and the same vertices plus the vertex \(-e_1-\cdots -e_{\lfloor n/2\rfloor}+e_{\lfloor n/2\rfloor+1}+\cdots +e_n\) if \(n\) is odd. 
In case \(n\) is even, we have 
\[ \langle e_1^*+\cdots+e_{\lfloor n/2\rfloor}^*-e_{\lfloor n/2\rfloor+1}^*-\cdots e_n^*, e_1+\cdots+e_n \rangle = 0\]
so that the vertex condition is not satisfied. 
\end{exa}

\subsection{Examples in low dimensions} 

In dimension 1, there is only one reflexive polytope, the moment polytope of \(\bbP^1\), which is indeed a Weyl polytope for the root system \(A_1\). 

In dimension 2, among the \(16\) reflexive polytopes, five are reflexive Weyl polytopes. 
These are the moment polytope of \(\bbP^2\) and its dual, the moment polytope of \(\bbP^1\times \bbP^1\) and its dual, and the moment polytope of the blow up of \(\bbP^2\) at three points, which coincides with the variety \(V_2\) defined in the previous section. 
Note that they are all smooth.
As noted before, the polytope of \(V_2\) does not satisfy the vertex condition. 

In dimension 3, we determine the Weyl polytopes and their dual as follows, using  
the classification of three-dimensional reflexive polytopes by Kreuzer and Skarke \cite{Kreuzer-Skarke_1997,Kreuzer-Skarke_1998}, as enumerated in Kasprzyk's grdb database \cite{Kasprzyk_2010,grdb} of canonical Fano polytopes by \emph{reflexive ID}.  

\begin{prop}
    All 3-dimensional reflexive Weyl polytopes have automorphism group of order strictly larger than 8. 
Furthermore, there are only 5 reflexive 3-dimensional polytopes with automorphism group of order larger than 8 that are not Weyl polytopes or dual of Weyl polytopes: polytopes number 735, 3036, 2355, 156 and 4249. Among these 5, only the last 2 satisfy the vertex condition. 
\end{prop}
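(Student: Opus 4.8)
The plan is to turn the statement into a finite verification built on three ingredients: the classification of rank-$3$ crystallographic reflection groups, an explicit enumeration of the $3$-dimensional reflexive Weyl polytopes, and the automorphism data recorded in Kasprzyk's grdb database.

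First I would reduce the possible Weyl groups. A $3$-dimensional Weyl polytope $\Delta_W(m)$ spans $M_\bbR$, so $\Phi$ is a root system of rank $3$; hence $\Phi$ is one of $A_1^3$, $A_1\oplus A_2$, $A_1\oplus B_2$, $A_1\oplus G_2$, $A_3$, $B_3$, $C_3$, whose Weyl groups have orders $8,12,16,24,24,48,48$. Since $W$ acts faithfully on $M_\bbR$, acts trivially on $\Lambda_w/\Lambda_r$, and preserves $\Delta_W(m)$, we get an embedding $W\hookrightarrow\Aut(\Delta_W(m))$ into the lattice-automorphism group, so whenever $\Phi\ne A_1^3$ the automorphism group already has order $\ge 12>8$. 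Only the case $\Phi=A_1^3$, where $|W|=8$, requires separate attention, and for that I would use the enumeration below.

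Next I would enumerate the $3$-dimensional reflexive Weyl polytopes. For a reducible root system $\Phi=\Phi_1\oplus\Phi_2$ with $m=m_1+m_2$ split accordingly, $\Delta_W(m)=\Delta_{W_1}(m_1)\times\Delta_{W_2}(m_2)$ (the convex hull of a product of finite sets is the product of their convex hulls), and a product polytope is reflexive iff both factors are; combined with the lists of reflexive Weyl polytopes in dimensions $1$ and $2$ recalled above, and with the finitely many intermediate lattices $\Lambda_r\subseteq M\subseteq\Lambda_w$, this produces a short explicit list of reducible examples. In particular, for $A_1^3$ each one-dimensional factor is forced to be the standard reflexive segment, so $\Delta_W(m)$ is a coordinate box combinatorially equivalent to the cube, and running through the admissible lattices shows its automorphism group always has order $\ge 16>8$, which settles the borderline case of the previous step. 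For the irreducible rank-$3$ systems $A_3$, $B_3$, $C_3$ I would, for each lattice $\Lambda_r\subseteq M\subseteq\Lambda_w$, list the $m\in M\cap C_M^+$ for which $\Delta_W(m)$ is reflexive; here Proposition~\ref{prop:Cones} together with the Montagard–Rittatore description of $\Delta_W^\vee(m)$ reduces reflexivity to a finite check, because the only candidate vertices of the dual are the $W$-translates of the fundamental coweights. This yields a complete finite list of all $3$-dimensional reflexive Weyl polytopes.

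Finally I would match this list against the database: for each polytope in the list I would record its reflexive ID and that of its dual, and confirm from grdb that all of them have automorphism group of order $>8$, which is the first assertion. Then I would extract from grdb the (finite) list of all $3$-dimensional reflexive polytopes with automorphism group of order $>8$; removing the Weyl polytopes and their duals should leave precisely $735$, $3036$, $2355$, $156$, $4249$. For each of these five I would read off the vertex coordinates of $\Delta$ and of $\Delta^\vee$ and compute all pairings $\langle m,n\rangle$, verifying that the value $0$ occurs for $735$, $3036$ and $2355$ but for neither $156$ nor $4249$. The hard part will be the completeness of the enumeration in the previous step — in particular, allowing every intermediate lattice between $\Lambda_r$ and $\Lambda_w$ rather than only $M=\Lambda_r$ as in Montagard–Rittatore, and double-checking that the single reflection group of order exactly $8$ never produces a polytope with automorphism group of order exactly $8$; once the list is known to be complete, the database matching, the automorphism-order lookup, and the vertex-condition checks are all routine computer-assisted verifications.
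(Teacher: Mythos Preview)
Your approach is correct but takes a genuinely different route from the paper's. You work constructively: enumerate all rank-$3$ reflexive Weyl polytopes from the root-system side (listing root systems, intermediate lattices, and admissible $m$), then match against the database. The paper instead works eliminatively from the database side: it first uses the observation that a Weyl polytope has barycenter at the origin, which together with the bound $|W|\ge 8$ cuts the list of candidates to $28$ reflexive polytopes, and then disposes of the nine candidates with automorphism group of order exactly $8$ by short ad hoc arguments (two are simplices; several visibly fail vertex- or facet-transitivity; one dual pair has non-abelian automorphism group, whereas $W(A_1^3)$ is abelian of order $8$). The same barycenter observation immediately shows that $156$, $4249$, $2355$ cannot be Weyl polytopes or their duals, so the paper never needs the full enumeration you propose. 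Your route is more systematic and would in principle scale better, but be aware of one soft spot: for a reducible $\Phi$ with a \emph{non-product} intermediate lattice $M$, the Weyl polytope is still a product of lower-dimensional polytopes in $M_{\bbR}$, yet reflexivity with respect to $M$ does not reduce to reflexivity of the factors, so your ``each one-dimensional factor is forced to be the standard reflexive segment'' only covers the product lattices; the $A_1^3$ case genuinely requires the lattice-by-lattice check you already flag as the hard part. The paper sidesteps this entirely by arguing from the database rather than from the root systems.
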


\begin{proof}
In dimension 3, the possible root systems that can be used to construct Weyl polytopes are \(A_3\), \(B_3\), \(C_3\), and products of lower rank root systems. 
In particular, their automorphism group is of order at least \(8\) (and if it is equal to \(8\), the root system is \((A_1)^3\)). 
Furthermore, by symmetry, the barycenter of a Weyl polytope is the origin. 
This narrows down the list of candidates to 28. 

It is not hard to check that the nine polytopes whose automorphism group have order eight are not Weyl polytopes. Indeed, polytopes 9 and 3314 are simplices hence they cannot be Weyl polytopes or dual of Weyl polytopes for \((A_1)^3\). Polytopes number 199, 3416, 2131, 610 are obviously not vertex transitive since for some but not all of their vertices, the opposite lattice point is also a vertex. As a consequence, they cannot be Weyl polytopes, and since their dual is in the same list, they cannot be dual to Weyl polytopes. Polytope 1324 is not facet transitive (some facets have four vertices and some have three), and self-dual, so it cannot be a Weyl polytope or its dual. 
Finally, polytope 4167 is dual to polytope 25, and one can check that their automorphism group is not abelian (which would be the case for the Weyl group of \((A_1)^3\), already of order \(8\)). 
Apart from the two simplices, none of these polytopes satisfy the vertex condition. 

The four polytopes with automorphism group of order \(12\) are Weyl polytopes or the dual of a Weyl polytope for the product root system \(A_1\times A_2\). 
The polytope number 4287 is the moment polytope of the smooth threefold \(\bbP^1\times \bbP^2\), it is a reflexive Weyl polytope, whose dual is the polytope number 5. 
The polytope number 776 is the product of the moment polytope of \(\bbP^1\) with the dual of the moment polytope of \(\bbP^2\). It is a reflexive Weyl polytope whose dual polytope is the polytope number 769. 

There are \(4\) polytopes with automorphism group of order \(16\). The polytopes number 3036 and 735 are dual to each other, and none of the two are facet-transitive, so none are Weyl polytopes. Furthermore, they do not satisfy the vertex condition. 
On the other hand, polytope number 2078 is the product of the moment polytope of \(\bbP^1\) with the polytope dual to the moment polytope of \(\bbP^1\times \bbP^1\), and it is a reflexive Weyl polytope for the root system \(A_1\times B_2\) (or, also, \((A_1)^3\)). 
Its dual is polytope number 510.

The five polytopes with automorphism group of order \(24\) all appear as Weyl polytopes or duals of Weyl polytopes. 
Polytope number 3875 is the moment polytope of \(\bbP^1\times V_2\), a Weyl polytope for the root system \(A_1\times G_2\) or \(A_1\times A_2\), and its dual is polytope number 219. 
Note that these polytopes do not satisfy the vertex condition.
Polytope number 4312 is the moment polytope of \(\bbP^3\), a Weyl polytope for the root system \(A_3\), and its dual is polytope number 1. 
The fifth polytope, numbered 428, is a self-dual reflexive polytope. It appears as a Weyl polytope in \cite{Montagard-Ressayre_2009} as the regular simplex \(\mathcal{S}_2^3\). 

Finally, the six polytopes with automorphism group of order \(48\) all appear as Weyl polytopes from \cite{Montagard-Rittatore_2023} or their duals. 
Polytope number 1530 is the Weyl polytope \(\Delta_W(\omega_1+\omega_3)\) for the root system \(A_3\), or the Weyl polytope \(\Delta_W(\omega_2)\) for the root system \(C_3\). 
Its dual is polytope number 2356. 
These two polytopes do not satisfy the vertex condition.  
Polytope number 3350 is the Weyl polytope \(\Delta_W(2\omega_2)\) for the root system \(A_3\), or the Weyl polytope \(\Delta_W(2\omega_1)\) for the root system \(C_3\). 
Its dual is polytope number 198. 
Finally, polytope number 31 is the moment polytope of \((\bbP^1)^3\). It is the Weyl polytope \(\Delta_W(\omega_1)\) for the root system \(B_3\) (or also a Weyl polytope for the root systems \((A_1)^3\) or \(A_1\times B_2\)). 
Its dual is polytope number 4251, which is as well a Weyl polytope: \(\Delta_W(2\omega_3)\) for the root system \(B_3\). 

To prove the second part of the statement, we observe that there are only three reflexive polytopes with automorphism goup of order strictly larger than 8 and barycenter different from the origin. 
These are polytopes number 156 and 4249, which satisfy the vertex condition, and polytope number 2355, which does not satisfy the vertex condition. 
\end{proof}

\section{Optimal transport stability of reflexive Weyl polytopes}

\subsection{Reflections and optimal transport}

We consider a (linear) reflection \(\sigma\) of \(M_{\bbR}\), that is, an isomorphism \(M_{\bbR}\to M_{\bbR}\) which is an involution, and leaves stable a linear hyperplane \(H\subset M_{\bbR}\). 
For consistency with the previous section, we introduce a root formalism. 
Let \(\alpha\) be an eigenvector for the eigenvalue \(-1\), which is well-defined up to multiple, and let \(\alpha^{\vee}\) be the element of \(N_{\bbR}\) such that \(\langle h, \alpha^{\vee}\rangle=0\) for \(h\in H\), and \(\langle \alpha,\alpha^{\vee}\rangle=2\). 
Then for \(m\in M_{\bbR}\), we can write 
\begin{equation} \label{eq:reflections} \sigma(m)=m-\langle m, \alpha^{\vee}\rangle\alpha \end{equation}
Furthermore, \(\sigma\) defines a corresponding reflection \(\sigma^{\vee}\) of \(N_{\bbR}\), obtained by exchanging the roles of \(\alpha\) and \(\alpha^{\vee}\), such that for \(n\in N_{\bbR}\), \(\sigma^{\vee}(n)=n-\langle \alpha, n\rangle\alpha^{\vee}\). 

We consider a (convex) polytope \(\Delta \subset M_{\bbR}\) containing the origin, and stable under \(\sigma\). 
Then the dual polytope \(\Delta^{\vee}\subset N_{\bbR}\) is stable under \(\sigma^{\vee}\).

We further consider finite measures \(\mu\) on \(\partial \Delta\) and \(\nu\) on \(\partial \Delta^{\vee}\) which are respectively \(\sigma\) and \(\sigma^{\vee}\) invariant. 
We consider a $(\sigma,\sigma^\vee)$-invariant optimal transport plan from \(\mu\) to \(\nu\), for the cost function 
\[
c:
\partial \Delta \times \partial \Delta^{\vee} \to \bbR, 
(m,n) \mapsto -\langle m, n\rangle 
\]
This means the transport plan is a $(\sigma,\sigma^\vee)$-invariant minimizer of 
$$ \gamma \mapsto \int_{\partial\Delta\times \partial\Delta^\vee} c\gamma $$
and, consequently, that the support of the transport plan is $c$-cyclically monotone, i.e. 
\[c(m_1,n_1)+\ldots\ + c(m_k,n_k) - (c(m_1,n_2)+\ldots+c(m_k-1,n_k)+c(m_k,n_1))\leq 0 \]
for all set of pairs $(m_1,n_1),\ldots,(m_k,n_k)$ in the support of the transport plan. 

\begin{prop} 
\label{prop_OT_and_reflection}
For any \((m,n)\) in the support of a $W$-invariant optimal transport plan, we have 
\[ \langle m,\alpha^{\vee}\rangle \langle \alpha, n \rangle  \geq 0 \]
\end{prop}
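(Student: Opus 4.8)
The plan is to derive the inequality directly from the $c$-cyclical monotonicity of the support of the optimal transport plan, applied to just two points: a given pair $(m,n)$ in the support together with its image $(\sigma(m),\sigma^{\vee}(n))$, which lies in the support as well because the plan is $(\sigma,\sigma^{\vee})$-invariant.

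First I would record two elementary identities. Since $\sigma^{\vee}$ is, by construction, the transpose of $\sigma$ with respect to the duality bracket, and since $\sigma^{\vee}$ is an involution, we have $\langle \sigma(m),\sigma^{\vee}(n)\rangle = \langle m, \sigma^{\vee}\sigma^{\vee}(n)\rangle = \langle m,n\rangle$. Second, expanding with \eqref{eq:reflections},
\[ \langle \sigma(m), n\rangle = \langle m,n\rangle - \langle m,\alpha^{\vee}\rangle\langle\alpha,n\rangle = \langle m,\sigma^{\vee}(n)\rangle. \]
Now let $(m,n)$ be in the support of the plan. By invariance $(\sigma(m),\sigma^{\vee}(n))$ is in the support as well, so the $c$-cyclical monotonicity inequality with $k=2$, applied to the ordered pairs $(m_1,n_1)=(m,n)$ and $(m_2,n_2)=(\sigma(m),\sigma^{\vee}(n))$, reads
\[ c(m,n) + c(\sigma(m),\sigma^{\vee}(n)) \leq c(m,\sigma^{\vee}(n)) + c(\sigma(m),n). \]
Substituting $c=-\langle\cdot,\cdot\rangle$ and the two identities above, the left-hand side equals $-2\langle m,n\rangle$ while the right-hand side equals $-2\langle m,n\rangle + 2\langle m,\alpha^{\vee}\rangle\langle\alpha,n\rangle$; cancelling $-2\langle m,n\rangle$ leaves exactly $0 \leq \langle m,\alpha^{\vee}\rangle\langle\alpha,n\rangle$, which is the assertion.

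I do not expect a genuine obstacle here: once cyclical monotonicity is invoked, the proof is a short computation. The only points deserving a little care are the transpose/involution identity $\langle\sigma(m),\sigma^{\vee}(n)\rangle = \langle m,n\rangle$ — this is precisely where the definition of $\sigma^{\vee}$ as the reflection obtained by exchanging $\alpha$ and $\alpha^{\vee}$ enters — and the fact that it is the $(\sigma,\sigma^{\vee})$-invariance of the transport \emph{plan} itself (which is built into our choice of minimizer) rather than of the marginals $\mu$ and $\nu$ alone that legitimizes placing $(\sigma(m),\sigma^{\vee}(n))$ in the support.
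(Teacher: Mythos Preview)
Your argument is correct and follows exactly the same route as the paper: apply $c$-cyclical monotonicity with $k=2$ to the pair $(m,n)$ together with its reflection $(\sigma(m),\sigma^{\vee}(n))$, which lies in the support by invariance of the plan, and then expand using \eqref{eq:reflections}. The only cosmetic difference is that the paper rewrites the inequality as $\langle m-\sigma(m),\,n-\sigma^{\vee}(n)\rangle$ before expanding, whereas you substitute the identities $\langle\sigma(m),\sigma^{\vee}(n)\rangle=\langle m,n\rangle$ and $\langle\sigma(m),n\rangle=\langle m,\sigma^{\vee}(n)\rangle$ directly; the computations are equivalent.
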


\begin{proof}
This is a direct consequence of \(c\)-cyclical monotonicity. 
Indeed, by uniqueness, the transport plan is invariant under the involution \((\sigma,\sigma^{\vee})\), so if \((m,n)\) is in the support of the optimal transport plan, then \((\sigma(m),\sigma(n))\) is as well. 
Applying \(c\)-cyclical monotonicity to these two points of \(M_{\bbR}\times N_{\bbR}\), we have 
\[ c(m,n)+c(\sigma(m),\sigma^{\vee}(n)) \leq c(m,\sigma^{\vee}(n))+c(\sigma(m),n) \]
or, by substituting $c(\cdot,\cdot)$ by $-\langle \cdot,\cdot\rangle$ and rearrangeing, 
\[ \langle m-\sigma(m),n-\sigma^{\vee}(n)\rangle \leq 0 \]
Going back to the explicit expressions \(\sigma\), we have 
\[ \langle m-\sigma(m),n-\sigma^{\vee}(n)\rangle = -\langle \langle m,\alpha^{\vee}\rangle \alpha, \langle \alpha, n \rangle \alpha^{\vee}\rangle = -2\langle m,\alpha^{\vee}\rangle \langle \alpha, n \rangle \]
hence the result. 
\end{proof}

\subsection{Optimal transport stability of reflexive Weyl polytopes}
Given a lattice polytope of dimension $d+1$, its integral surface measure is the measure supported on the open faces of its boundary defined by the following two facts:
\begin{itemize}
	\item it restricts to a multiple of $d$-dimensional Lebesgue measure on each open face 
	\item it assigns volume $1/d!$ to any simplex spanned by a set of generators of the sublattice contained in the affine subspace spanned by the open face.
\end{itemize}
We will use $\mu_M$ to denote the integral surface measure on $\partial\Delta$ and $\nu_N$ to denote the integral surface measure on $\partial\Delta^\vee$. A central point is given by the real Monge-Ampère equation
\begin{eqnarray}
    & & \Phi: M_{\mathbb R} \rightarrow \mathbb R \text{ convex} \label{eq:MA1} \\
    & & \det(D^2\Phi|_{\Int \partial \Delta}) = \mu_M \nonumber \\
    & & \overline {\partial\Phi(M_\mathbb R)} = \Delta^\vee \nonumber
\end{eqnarray}
and its dual
\begin{eqnarray}
    & & \Psi: N_{\mathbb R} \rightarrow \mathbb R \text{ convex} \label{eq:MA2} \\
    & & \det(D^2\Psi|_{\Int \partial \Delta^\vee}) = \nu_N \nonumber \\
    & & \overline {\partial\Psi(N_\mathbb R)} = \Delta. \nonumber
\end{eqnarray}
\begin{defn}
A reflexive polytope \(\Delta\) is \emph{optimal transport stable} if there exists an optimal transport plan from \(\mu_M\) to \(\nu_N\) which is supported on 
\begin{equation} 
\label{eq:Stability}
\bigcup_{m\in V(\Delta)} \Star(m)\times \tau_m \subset M_{\bbR}\times N_{\bbR} 
\end{equation}
\end{defn}

\begin{thm}
Assume that \(\Delta\) is a reflexive Weyl polytope, let \(W\) be the corresponding Weyl group and $m\in C^+_M\cap M$ be the lattice element used to generate $\Delta$. 
Then the optimal transport plan for \(W\)-invariant measures on \(\partial \Delta\) and \(\partial \Delta^{\vee}\) is supported on 
\begin{equation} 
\label{eq:OTPSupport}
\bigcup_{w \in W} w( \Star(m)\cap C_M^+ ) \times w^{\vee} (\tau_m\cap C_N^+) 
\end{equation}
In particular, \(\Delta\) is optimal transport stable, and the corresponding Monge-Ampère equations \eqref{eq:MA1} and \eqref{eq:MA2} admits solutions, which are smooth everywhere but on a piecewise affine co-dimension 2 set and a co-dimension 1 set respectively. 
\end{thm}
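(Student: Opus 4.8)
The plan is to combine Proposition~\ref{prop_OT_and_reflection} with the fundamental-domain structure provided by Proposition~\ref{prop:Cones}, and then quote the regularity machinery of \cite{Andreasson-Hultgren,AHJMM_2024}. First I would establish existence and essential uniqueness of a $W$-invariant optimal transport plan $\gamma$ from $\mu_M$ to $\nu_N$: existence follows by a standard compactness argument for the Kantorovich problem (the cost $c(m,n)=-\langle m,n\rangle$ is continuous on the compact set $\partial\Delta\times\partial\Delta^\vee$), and since $\mu_M$, $\nu_N$ are $W$-invariant one may symmetrize any optimal plan over the finite group $W$ to obtain a $W$-invariant optimal plan; uniqueness of the plan (hence automatic $W$-invariance) is what actually lets us feed pairs like $(\sigma_\alpha m,\sigma_{\alpha}^\vee n)$ back into cyclical monotonicity, as in the proof of Proposition~\ref{prop_OT_and_reflection}.

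The heart of the argument is to pin down $\mathrm{supp}(\gamma)$. I would apply Proposition~\ref{prop_OT_and_reflection} simultaneously to every simple reflection $\sigma_{\alpha_i}$, $\alpha_i\in S$: for $(m,n)\in\mathrm{supp}(\gamma)$ we get $\langle m,\alpha_i^\vee\rangle\,\langle\alpha_i,n\rangle\ge 0$ for all $i$. Using that $C^+_M$ is a fundamental domain for $W$ on $M_{\bbR}$ (resp. $C^+_N$ for $W^\vee$ on $N_{\bbR}$), pick $w,v\in W$ with $w^{-1}m\in C^+_M$ and $(v^\vee)^{-1}n\in C^+_N$; the sign inequalities, transported by the $W$-equivariance of the bracket, should force $w$ and $v$ to be taken equal (or at least to lie in the same coset modulo the stabilizers $W_L$ of $m$ and the corresponding parabolic for $\tau_m$), so that after applying $w^{-1}$ one reduces to the case $m'\in C^+_M$, $n'\in C^+_N$. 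Combined with $m'\in\partial\Delta$, Proposition~\ref{prop:Cones} gives $m'\in\Star(m)\cap C^+_M$ and $n'\in\tau_m\cap C^+_N$. Undoing the group element yields exactly the claimed support \eqref{eq:OTPSupport}; the inclusion $\Star(m)\times\tau_m\supseteq(\Star(m)\cap C^+_M)\times(\tau_m\cap C^+_N)$ together with $W$-invariance then gives that the support lies in $\bigcup_{m'\in V(\Delta)}\Star(m')\times\tau_{m'}$, i.e.\ optimal transport stability in the sense of the definition.

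I expect the main obstacle to be precisely the coset-matching step: from the pointwise products of brackets being nonnegative one must deduce that a single $w$ simultaneously moves both $m$ and $n$ into their closed chambers. The danger is that $m$ and $n$ could a priori be moved by genuinely different Weyl elements; resolving this requires exploiting that $(m,n)$ is a coupling between \emph{dual} boundaries and that, by the description in \cite[Section~4]{Montagard-Rittatore_2023} recalled in the proof of Proposition~\ref{prop:Cones}, every vertex of $\Delta^\vee_W(m)$ is of the form $w(\omega_i^\vee)$ with $w$ nontrivial exactly when the vertex is not in $\tau_m$ --- so a "misaligned" pair would violate one of the sign inequalities $\langle m,\alpha_i^\vee\rangle\langle\alpha_i,n\rangle\ge0$. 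Once the support statement is in hand, the remaining claims are imported wholesale: optimal transport stability is exactly the hypothesis under which \cite[Theorem~3]{Andreasson-Hultgren} produces solutions $\Phi$, $\Psi$ to \eqref{eq:MA1}, \eqref{eq:MA2}, and the extra control of the transport map on each chamber (it is, on $w(C^+_M)$, essentially the dual map to an affine-pieced convex function) is what \cite{AHJMM_2024} needs to upgrade to smoothness away from a piecewise affine codimension-$2$ set on $\partial\Delta$ and away from a codimension-$1$ set (the facet walls) on $\partial\Delta^\vee$. I would close by noting the codimension bookkeeping: the bad set on the $\Delta$ side is the union of the $W$-translates of the walls of $C^+_M$ intersected with $\partial\Delta$, which is piecewise affine of codimension $2$, whereas on the $\Delta^\vee$ side the facets themselves are where regularity can fail, giving codimension $1$.
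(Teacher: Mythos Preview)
Your overall strategy coincides with the paper's, but you manufacture an unnecessary difficulty at the key step. The paper does not pick separate Weyl elements $w$ and $v$ for $m$ and $n$. Instead it uses that $W$-invariance of $\gamma$ is \emph{diagonal}: if $(m,n)\in\mathrm{supp}(\gamma)$ then $(w^{-1}m,(w^\vee)^{-1}n)\in\mathrm{supp}(\gamma)$ for every $w\in W$. Choosing $w$ so that $w^{-1}m\in C^+_M$ and then applying Proposition~\ref{prop_OT_and_reflection} to each simple root $\alpha_i$ gives $\langle w^{-1}m,\alpha_i^\vee\rangle\,\langle\alpha_i,(w^\vee)^{-1}n\rangle\ge0$; the first factor is nonnegative by construction, hence so is the second, and $(w^\vee)^{-1}n\in C^+_N$ follows immediately. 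There is no coset to match. (The residual ambiguity you anticipate when $m$ lies on a wall $\langle m,\alpha_i^\vee\rangle=0$ is absorbed exactly by the parabolic $W_L$ you already mention: since $W_L$ fixes $m$, one may further act by $W_L^\vee$ on $n$ alone, staying in $\mathrm{supp}(\gamma)$, to land in $C^+_N$.)

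Your regularity sketch needs two corrections. First, the set you name as singular on $\partial\Delta$ --- the $W$-translates of the chamber walls intersected with $\partial\Delta$ --- has codimension~$1$ in $\partial\Delta$, not~$2$. The paper obtains codimension~$2$ by a finer argument: it groups chambers into open sets $U_m$ (the relative interior of $\bigcup_{w:\,wC^+_N\subset\tau_m} wC^+_M$), observes that $\gamma$ restricted to $U_m\times\tau_m$ is itself an optimal plan between $\mu_M|_{U_m}$ and $\nu_N|_{\tau_m}$, reduces this to a planar optimal transport problem via the quotients $M_\bbR\to M_\bbR/\bbR m$ and $N_\bbR\to N_\bbR/\bbR n$, and then invokes Caffarelli's interior and boundary regularity \cite{CaffarelliInt,CaffarelliBoundary}. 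The discriminant locus is the intersection of $\partial\bigl(\bigcup_w wU_m\bigr)$ with the $(d-1)$-faces of $\Delta$, which has dimension $d-2$. Second, the regularity is therefore not ``imported wholesale'': \cite{Andreasson-Hultgren,AHJMM_2024} supply the template, but executing it here relies on the explicit reduction to planar transport on each $U_m$ that the chamber decomposition makes possible.
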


\begin{proof}
Let $\gamma$ be a $W$-invariant optimal transport plan (this always exist since the optimal transport problem is convex and $W$-invariant). We will prove that $\gamma$ is supported on \eqref{eq:OTPSupport}. Let $(m,n)$ be a point in the support of $\gamma$. By $W$-invariance, we may without loss of generality assume that $m\in C^+_M$. For any $\alpha\in S$ we have by Proposition~\ref{prop_OT_and_reflection} that the sign of $\langle m, \alpha \rangle$ is the same as the sign of $\langle \alpha^\vee,n \rangle$. Since $m\in C^+_M$ we get that $n\in C^+_N$, hence that $(m,n)$ lies in \eqref{eq:OTPSupport}. By Proposition~\ref{prop:Cones} it follows that $\gamma$ is supported on \eqref{eq:Stability}. It follows by \cite[Theorem 1]{Andreasson-Hultgren} that \eqref{eq:MA1} and \eqref{eq:MA2} admits solutions and if either $\Delta$ or $\Delta^\vee$ is Delzant, then the weak metric SYZ conjecture holds for the Dwork family in the corresponding toric Fano manifold. 

In order to get higher regularity, note that $\mu_M(C^+_M) = \nu_N(C^+_N)$. It follows that the optimal transport problem can be reduced to a collection of planar optimal transport problems as in \cite[Theorem~3]{Andreasson-Hultgren} and \cite{AHJMM_2024}. More precisely, let $U_m$ be the relative interior in $\partial\Delta$ of 
$$\bigcup_{w} wC^+_M$$
where the union is taken over all $w\in W$ such that $w C^+_N\subset \tau_m$. Then the restriction of $\gamma$ to $U_m\times \tau_m$ is (after scaling) an optimal transport plan from $\mu_M|_{U_m}$ to $\nu_N|_{\tau_m}$. This can be reduced to a planar optimal transport problem (useing the quotient maps to $d$-dimensional vector spaces $M_\mathbb R\rightarrow  M_\mathbb R/\mathbb R m$ and $N_\mathbb R\rightarrow  N_\mathbb R/\mathbb R n$ for some vertex $n\in \tau_m$). It follows from \cite{CaffarelliInt, CaffarelliBoundary} that $\Phi-n$ is smooth and globally $C^{1,\alpha}$ on $U_m$. By $W$-symmetry of $\Phi$ the same conclusion holds on 
$$ \bigcup_{w\in W} wU_m $$
and we get that the $c$-gradient $\partial^c \Phi:\partial \Delta \rightarrow \partial\Delta^\vee$ and $\partial^c \Psi:\partial \Delta^\vee \rightarrow \partial\Delta$ are Hölder continuous homeomorphisms, one the inverse of the other. From this, it follows that $\Phi$ and $\Psi$ are convex, and hence smooth, when restricted to open faces of $\partial \Delta$ and $\partial\Delta^\vee$, respectively. Consequently, the discriminant locus of $\Phi$ is given by the intersection of the boundary of $\bigcup_{w\in W} wU_m$ with the $d-1$ dimensional faces in $\Delta$: a piecewise affine set of dimension $d-2$. 
The discriminant locus of $\Psi$ is the image of this set under the Hölder continuous map $\partial^c \Phi$. This image lies in the $d-1$-dimensional faces of $\Delta^\vee$. 
\end{proof}

\bibliographystyle{alpha}
\bibliography{SYZRS}

\end{document}